\documentclass[11pt]{amsart}
\usepackage{amsmath,amssymb,amsthm,mathtools}
\usepackage[left=3cm,right=3cm,top=3cm,bottom=3cm]{geometry}
\usepackage{mlmodern}
\usepackage{graphicx} 
\usepackage{tikz-cd}
\usepackage[style=alphabetic]{biblatex} 
\usepackage[dvipsnames]{xcolor}
\usepackage[colorlinks=true, citecolor=teal, linkcolor=teal]{hyperref}
\usepackage{enumitem}
\usepackage{subcaption}
\usepackage{ stmaryrd }

\def \Z {\mathbb{Z}}

\def \R {\mathbb{R}}

\def \P {\mathbb{P}}
\def \F {\mathbb{F}}
\def \RP {\mathbb{RP}}

\def \O {\mathcal{O}}
\def \H {\mathcal{H}}

\def \deg {\mathrm{deg}}
\def \dim {\mathrm{dim}}

\renewcommand{\geq}{\geqslant}
\renewcommand{\leq}{\leqslant}

\newcommand{\ie}{i.e. }

\theoremstyle{plain}
\newtheorem{thm}{Theorem}[section]

\newtheorem{cor}[thm]{Corollary}
\newtheorem{lem}[thm]{Lemma}
\newtheorem{defi}[thm]{Definition}

\theoremstyle{remark}
\newtheorem{rmk}[thm]{Remark}
\newtheorem{ex}[thm]{Example}

\makeatletter
\def\l@subsection{\@tocline{2}{0pt}{2.5pc}{5pc}{}}
\makeatother

\makeatletter
\renewcommand{\l@section}{\@tocline{1}{0pt}{10pt}{1pc}{\bfseries}}
\makeatother

\usepackage{todonotes}

\newcommand{\gadd}[1]{\textcolor{orange}{#1}}

\title{Non-existence of separating morphisms of low degree}
\author{Aloïs Demory}
\address{Eberhard Karls Universität Tübingen - Fachbereich Mathematik - Auf der Morgenstelle, 10, 72076 Tübingen, Deutschland}
\email{demory@math.uni-tuebingen.de}

\author{Gurvan Mével}
\address{Institut de Mathématiques de Jussieu -- Paris Rive Gauche, Sorbonne Université, 4 place Jussieu, 75252 Paris Cédex 5, France}
\email{gurvan.mevel@imj-prg.fr}

\author{Antoine Toussaint}
\email{antoine.toussaint@ac-creteil.fr}

\begin{document}

\begin{abstract}
    A separating curve is a real curve whose real part disconnects its complex part. It also admits totally real morphisms to the projective line, and the separating gonality is the minimal possible degree for such morphisms. The separating gonality is bounded from below by the number of connected components of the real part of the curve. In this paper, we build on a strategy of Manzaroli to improve this lower bound for some curves embedded in the projective plane or in a Hirzebruch surface.
\end{abstract}

\maketitle

\tableofcontents

\section{Introduction}\label{intro}

\subsection{Background}\label{motivation}

    The concept of \emph{separating curve} has been introduced by Klein \cite{K} in order to study the topology of real algebraic curves. A separating curve is a real curve whose real part disconnects its complex part. Although it was first introduced as a purely topological feature, Ahlfors has shown it to be equivalent to an algebraic notion, namely the existence of a \emph{separating morphism} \cite{A50}. One can then study separating curves through their separating morphisms. In particular, a separating morphism has a degree, and one can ask how do the topology and the geometry of the curve constrain the possible values for this degree. For instance, since the restriction of a separating curve to its real part is an unramified cover of $\RP^1$ then the degree is necessarily higher than the number $\ell$ of real connected components of $\mathbb R C$.

    The study of the possible degrees dates back to Ahlfors. In \cite{A50}, he gave an upper bound on the minimal degree of a separating morphism, that is $g+1$ where $g$ is the genus of the curve. This bound was later improved by Gabard \cite{G06} who showed that it can be reduced to $\frac{\ell+g+1}{2}$. 
    As shown by Coppens \cite{C11}, this result cannot be further improved in general: for any genus $g$ and for any number of components $\ell \leq g+1$ (with $\ell$ of the same parity as $g+1$), there exists a real separating curve $C$ of genus $g$ whose real part has $\ell$ connected components and that does not admit any separating morphism $f:C\to\P^1$ of degree less than $\frac{\ell+g+1}{2}$. According to the vocabulary introduced in \cite{C11}, it means that the \textit{separating gonality} of $C$ is $\frac{\ell+g+1}{2}$. In fact the result of \cite{C11} is even stronger and shows that if $g\geq2$, then for any $\max(2,\ell) \leq k \leq \frac{\ell+g+1}{2}$ there exists a real separating curve of genus $g$ with $\ell$ real components whose separating gonality is $k$.
    
    Kummer and Shaw introduced and studied the \textit{separating semi-group} of a real separating curve (\cite{KS17}), which encapsulates not only the degrees of separating morphisms but also their degree on each connected component of the real part. 
    The separating semi-groups of curves of genus 3 and 4 have been computed by Orevkov \cite{O20,O25}, and a study for plane quintics has been carried out by Magin and Orevkov \cite{MO26}. In \cite{M24b} Manzaroli focused on properties of separating $(M-2)$-curves. Magin proved that for fixed genus $g$, the set of possible separating semi-groups of a curve of genus $g$ is finite (\cite{Magin26}).
    
    So far, the main tool to obtain restrictions on the separating semi-group of a real curve is a result of Orevkov (\cite[Theorem~3.2]{O21}), which was successfully used in \cite{O25} and \cite{M24b}.
    Manzaroli also used this theorem to obtain general restrictions on the existence of separating morphisms whose degree is equal to the number $\ell$ of components of the real part in \cite{M24}. 
    Unfortunately, the proof of the main result happens to fail in some cases, as we will see in section \ref{contre-exemple}. In this paper, we build on the ideas of Manzaroli to give a corrected statement.

\subsection{Structure and results of the paper}

In section \ref{prelim} we give definitions that we use in the rest of the paper. We also recall \cite[Theorem 3.2]{O21} as our proofs rely on it.

In section \ref{contre-exemple} we present a counter-example to \cite[Theorem 1.12]{M24} and explain what the gap in the proof is.

Section \ref{sec-statements} is the main part of the paper. We first present a lemma based on Orevkov's theorem to find a lower bound on the dimension of a certain linear system. We then apply Manzaroli's strategy to the particular cases of the projective plane $\P^2$ and of the Hirzebruch surfaces $\F_n$. We also adapt the proof to study separating morphisms of degree $\ell+k$, with $k\geq0$, and not only of degree $\ell$.

In particular we obtain the following for separating curves embedded in the projective plane.

\begin{thm}[Corollary \ref{coro-P2}]
    Let $C\subset \P^2$ be an irreducible non-singular separating curve of degree $m$ whose real part has $\ell$ connected components. Let $k\geq0$ be an integer and $\varepsilon = \ell+k \mod 2$. If 
    \begin{enumerate}
        \item[(a)] either $m$ is odd and 
        \[ k^2 +2k+3-\varepsilon \leq \ell \leq  \frac{(m-3)(m+3)}{4} -k-1+\varepsilon\]

        \item[(b)] or $m$ is even and 
        \[ k^2 + 2(1-\varepsilon)k +\varepsilon +1 \leq  \ell \leq  \frac{(m-4)(m+2)}{4} -k-1+\varepsilon\]
    \end{enumerate}
    then there exists no separating morphism $f:C\to\P^1$ of degree $\ell+k$.

    In particular, if $k_{\max}$ is the maximal $k$ such that the previous hypotheses are satisfied, then the separating gonality of $C$ is at least $\ell+k_{\max}+1$.
\end{thm}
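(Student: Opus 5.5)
The plan is to derive the statement from the general lemma of Section~\ref{sec-statements}, which rests on \cite[Theorem~3.2]{O21}, specialized to the ambient surface $\P^2$. I have not reconstructed that lemma precisely, so its exact shape below is a guess.

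Suppose, for contradiction, that $f:C\to\P^1$ is separating of degree $d=\ell+k$. Since $f$ is totally real and $\mathbb RC$ has $\ell$ components, a generic real fiber $F=f^{-1}(p)$ puts at least one point on each component. Exactly $k$ points are then distributed as extra points on some components, and the parity $\varepsilon\equiv \ell+k \pmod 2$ records the parity of the total count. The lemma should convert Orevkov's restriction, which ties the complex orientation of $C$ to the orientation of the real fibers of $f$, into a lower bound on a linear system. Concretely, I expect it to say that the real curves $A$ in a fixed class $sH$ whose real part meets $\mathbb RC$ in a prescribed configuration cannot be too numerous. Equivalently, $h^0(\P^2,\O(s))$ minus the number of conditions imposed by points of $F$ is bounded above by a quantity involving $k$.

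First, choose the degree $s$ of the auxiliary curves to be as close to half-canonical as possible. Since $K_C=\O_C(m-3)$, take $s=(m-3)/2$ when $m$ is odd. When $m$ is even, take $s=(m-4)/2$, combined with one extra line to fix parity. The number of free parameters is then $\dim|\O(s)| = s(s+3)/2$. This equals $(m-3)(m+3)/8$ in the odd case and $(m-4)(m+2)/8$ in the even case. The factor $2$ in the upper bounds $\frac{(m-3)(m+3)}4$ and $\frac{(m-4)(m+2)}4$ appears because we use a pair of such curves, or the curve of degree $2s$ that their sum determines.

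Second, impose on $A$ that it pass through chosen points of $\mathbb RC$, one or two per component, so that $A$ has odd intersection, hence a sign change, on prescribed components. The upper bound $\ell\le 2\dim|\O(s)|-k-1+\varepsilon$ is exactly what guarantees that these conditions can be met by a nonzero $A$. Orevkov's theorem applied to $A$ and to the fibers of $f$ then yields a contradiction, unless the $k$ extra fiber points absorb the discrepancy. The lower bound is meant to rule out this absorption. We need $\ell\ge k^2+2k+3-\varepsilon$ for $m$ odd, or $\ell\ge k^2+2(1-\varepsilon)k+\varepsilon+1$ for $m$ even. Then the $k$ surplus points cannot cover all the components on which the sign condition must fail. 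The $k^2$ term should come from counting pairs of surplus points against components in the estimate of the lemma.

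The final sentence of the statement is then immediate. For every $k\le k_{\max}$ there is no separating morphism of degree $\ell+k$, and degree is always at least $\ell$, so the separating gonality is at least $\ell+k_{\max}+1$.

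The main obstacle is the second step, namely the exact parity bookkeeping. We must show that the auxiliary curve can be chosen real, nonsingular enough along $\mathbb RC$, and meeting $C$ transversally at the chosen points, with the sign pattern that Orevkov's theorem forbids. In the even case the extra line makes the constants (the $2(1-\varepsilon)k$ term) delicate. I would first try a generic member of the linear system through the prescribed points, with a small real perturbation to reach transversality. I would then verify the inequalities case by case in $\varepsilon$.
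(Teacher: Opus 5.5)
There is a genuine gap: the proposal never reaches a contradiction, and the mechanism you sketch misreads Theorem~\ref{thm-orevkov}. Two things are right: taking the largest admissible $D_0$ (degree $(m-3)/2$, or $(m-4)/2$ with $D_1$ a line), and reading the upper bound as $2\dim|D_0|-k-1+\varepsilon$. The sign-change idea, however, fails. In $D=2D_0+D_1$ the orientation of $\R\P^2\setminus(\R C\cup\R D_1)$ changes only across $\R C\cup\R D_1$. Since $D_0$ appears doubled, making it pass through points of $\R C$ creates no sign change at all; it only removes those points from the test.

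What Orevkov's theorem actually gives is the forced-incidence Lemma~\ref{lem-dimD(p)}. Let $V(p)$ be the set of points of $f^{-1}(p)$ where the orientations disagree; after reversing the orientation it has at most $\lfloor(\ell+k)/2\rfloor$ elements. Any member of $|D_0|$ through $V(p)$ then contains the whole fiber, so $h=\dim\mathcal{D}(p)\ge \frac{d_0^2+3d_0-\ell-k+\varepsilon}{2}$, and this bound is positive exactly when $\ell\le d_0^2+3d_0-k-1+\varepsilon$. This alone contradicts nothing.

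You also need an upper bound on $h$ from intersection numbers. If $\mathcal{D}(p)$ has no fixed component, $d_0^2\ge \ell+k+h-1$. Otherwise one varies $p$ to get a pencil $E_p=E+F_p$ through $h$ fixed generic points. Since $F_p\cdot C\ge 2\ell_c+\ell_n$ while $\deg f=\ell+k$, this pencil has at least $\ell_c-k$ base points on $C$, so $F_p^2\ge \ell_c-k+h'$. Together with $E^2\ge h-h'-1$ this gives $d_0^2\ge \ell_c-k+h-1$. Comparing with the lower bound on $h$ yields condition $(ii)$ of Theorem~\ref{thm-P2}. The fixed-component case is exactly where Manzaroli's original argument fails (Section~\ref{contre-exemple}), so an argument that ignores it cannot be complete.

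The lower bounds on $\ell$ also do not come from ``counting pairs of surplus points''. With only the largest $D_0$, Theorem~\ref{thm-P2} forbids just $\ell\in I(d_0)=\llbracket f(d_0),g(d_0)\rrbracket$, where $f(d_0)=d_0^2-3d_0+5+3k-\varepsilon$ (with $+3$ instead of $+5$ for $m$ even) and $g(d_0)=d_0^2+3d_0-k-1+\varepsilon$. Its left end is of order $m^2/4$; for $m=9$ it misses $(\ell,k)=(7,1)$, which the statement covers.

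The proof uses every decomposition $D=2D_0+D_1$, with $D_1$ a generic reduced curve of degree $m-3-2d_0$. One computes $g(d_0-1)-f(d_0)+1=4d_0-4k-7+2\varepsilon$ (resp.\ $4d_0-4k-5+2\varepsilon$), which is nonnegative for $d_0\ge k+2$ (resp.\ $d_0\ge k+2-\varepsilon$). So the intervals glue into one running from $f(k+1)$ (resp.\ $f(k+1-\varepsilon)$) to $g$ at the maximal $d_0$. The $k^2$ term is simply $f$ evaluated at that starting degree.

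Finally, the gonality statement needs the easy check that if $k\ge 1$ satisfies the hypotheses then so does $k-1$: the lower bound drops and the upper bound does not decrease. You assert this without verifying it.
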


\subsection{Acknowledgments}

This work was initiated during our stay at Geneva University. We acknowledge support by the Swiss National Science Foundation grant 204125 ``Interactions of real, tropical and symplectic geometry''. AD is supported by the Walter Benjamin Programme of the Deutsche Forschungsgemeinschaft. GM is supported by ERC Grant ROGW-864919.
We would like to thank E. Brugallé, I. Itenberg, G. Mikhalkin and S. Orevkov for fruitful discussions.


\section{Preliminaries}\label{prelim}

\begin{defi}
A \emph{real algebraic variety} $X$ is a complex algebraic variety equipped with an anti-holomorphic involution $\sigma$. Its \emph{real part} is the set of fixed points of $\sigma$, \ie $\R X = \mathrm{Fix}(\sigma)$.
\end{defi}

\begin{defi}
Let $X$ be a real algebraic surface and $C \subset X$ be a non-singular real algebraic curve. An \emph{oval} of $\R C$ is a connected component of $\R C$ that bounds a disk in $\R X$.
\end{defi}

\begin{rmk}
    When $X = \mathbb P^2$ or $\mathbb F_n$ and these surfaces are endowed with their standard real structure, the disk bounded by an oval $\mathcal O$ of $\R C$ in $\R X$ is unique. In these cases, we say that another oval $\mathcal O'$ lies \emph{inside} $\mathcal O$ if $\mathcal O'$ lies in the disk bounded by $\mathcal O$.
\end{rmk}

\begin{defi}
Let $C$ be a real algebraic curve. We say that $C$ is \emph{separating} if $C\setminus \R C$ has two connected components.  
\end{defi}

\begin{ex}
    The complex projective space $\P^n$ equipped with the complex conjugation \[ \bar \cdot : \P^n \to \P^n \] is a real algebraic variety. In particular, $\P^1$ is a separating curve.
\end{ex}

\begin{defi}
Let $(X,\sigma)$ and $(Y,\rho)$ be two real algebraic varieties. A morphism $f:X\to Y$ is \emph{real} if it commutes with the involutions, \ie $f\circ \sigma = \rho\circ f$.

    Let $C$ be a real algebraic curve and $f:C\to\P^1$ be a real morphism. We say that $f$ is \emph{separating} if $f^{-1}(\RP^1) = \R C$. 
\end{defi}

Since $\P^1$ is separating, then it is immediate that if $C$ admits a separating morphism then $C$ is itself separating. As mentionned in Section \ref{motivation}, Alhfors showed that the converse is true (see \cite{A50}).

\begin{rmk}
    Let $f:C\to\P^1$ be a real morphism. By definition, for any $x\in\R C$, one has $f(x) \in \RP^1$. Hence the condition for $f$ to be separating is equivalent to ask that $f^{-1}(y) \subset \R C$ for any $y\in\RP^1$. For this reason, separating morphisms are also called \emph{totally real} morphisms.
\end{rmk}

\begin{defi}[\cite{C11}]
    The \emph{separating gonality} of a real algebraic curve $C$ is the minimal possible degree of a separating morphism $C\to\P^1$.
\end{defi}

\begin{ex}
Let $C$ be a hyperbolic real plane curve of degree 4. Its real part consists of one oval inside another. Let $p\in\R\P^2$ be a point inside the inner oval, and consider the pencil of lines $(D_t)_{t\in\P^1}$ through $p$, see figure \ref{fig-deg4}. We define
\[ \begin{array}{cccl}
      f : & C & \to & \P^1  \\
      & x & \mapsto & t \text{ such that } x\in D_t
\end{array} . \]
Then $f$ is a real morphism and one can check it is separating. This implies in particular that $C$ is also separating. Moreover, since $C\cdot D_t=4$ then the degree of $f$ is 4 and the separating gonality of $C$ is at most 4.

\begin{figure}[h]
\begin{subfigure}[t]{0.49\textwidth}
	\centering
	\begin{tikzpicture}[scale=0.7]

    \clip (0,0) circle (4) ;

    \draw (0,0) circle (1) ;
    \draw (0,0) circle (2.5) ;
    \node at (-2.6,1) {\scriptsize $C$} ;
    
    \node at (-0.3,-0.2) {\scriptsize $\bullet$} ;
    \node at (-0.1,0.2) {\scriptsize $p$} ;

    \draw (-0.3,-0.2) -- +(45:5cm);
    \draw (-0.3,-0.2) -- +(45:-5cm);
    \draw (-0.3,-0.2) -- +(110:5cm);
    \draw (-0.3,-0.2) -- +(110:-5cm);
    \draw (-0.3,-0.2) -- +(15:5cm);
    \draw (-0.3,-0.2) -- +(15:-5cm);
    \node at (1.1,-3) {\scriptsize $D_t$} ;
		
	\end{tikzpicture}
	\caption{}
	\label{fig-deg4}
\end{subfigure}
\begin{subfigure}[t]{0.49\textwidth}
	\centering
	\begin{tikzpicture}[scale=0.7]

    \clip (0,0) circle (4) ;

    \draw (0,0) circle (1) ;
    \draw (0,0) circle (2.5) ;
    \node at (-2.6,1) {\scriptsize $C$} ;
    
    \node at (15:1) {\scriptsize $\bullet$} ;
    \node at (1.3,-0.1) {\scriptsize $q$} ;

    \draw (15:1) -- +(45:5cm);
    \draw (15:1) -- +(45:-5cm);
    \draw (15:1) -- +(110:5cm);
    \draw (15:1) -- +(110:-5cm);
    \draw (15:1) -- +(15:5cm);
    \draw (15:1) -- +(15:-5cm);
    \node at (2.5,-2.8) {\scriptsize $E_t$} ;
    
	\end{tikzpicture}
	\caption{}
	\label{fig-deg3}
\end{subfigure}
\caption{Examples of separating morphisms.}
\end{figure}

One can also take $q\in\R\P^2$ to be on the inner oval, $(E_t)_{t\in\P^1}$ to be the pencil through $q$, see figure \ref{fig-deg3}, and 
\[\begin{array}{cccl}
      g : & C & \to & \P^1  \\
      & x\neq q & \mapsto & t \text{ such that } x\in E_t \\
      & q & \mapsto & t \text{ such that } E_t \text{ is the tangent line at } q \text{ of } C
\end{array} . \]
Then $g$ is a separating morphism of degree $3$, so the separating gonality of $C$ is at most 3. 
Since the gonality of a plane curve of degree $d$ is $d-1$, then one concludes that the separating gonality of $C$ is 3. See also \cite[Example 3.8]{KS17}.
\end{ex}

We recall \cite[Theorem 3.2]{O21}.

\begin{thm}[{\cite[Theorem 3.2]{O21}}]\label{thm-orevkov}
    Let $X$ be a smooth real algebraic surface and $C$ be a smooth irreducible real separating curve on $X$. Let $D\in|C+K_X|$ be a real divisor and assume it does not have $C$ as a component. Write $D=2D_0+D_1$ with $D_0$ effective and $D_1$ effective and reduced. 
    Fix a complex orientation of $\R C$. Fix also an orientation on $\R X \setminus(\R C \cup \R D_1)$ which changes every time we cross $\R C \cup \R D_1$ at a smooth point (this orientation comes from the canonical class $D-C\sim K_X$). This orientation induces a boundary orientation on $\R C \setminus D_1$.
    Let $f:C\to\P^1$ be a separating morphism and let $p\in\P^1$.
    
    Then either $f^{-1}(p) \setminus D = \varnothing$ or there is a point in $f^{-1}(p) \setminus D$ at which the two orientations differ.    
\end{thm}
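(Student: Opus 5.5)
The plan is to realize $D$ as the divisor of a real meromorphic $2$-form, take its Poincaré residue to get a real holomorphic $1$-form on $C$, and show that a trace (residue-theorem) identity along a fibre of $f$ forces an orientation mismatch.

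Throughout, $p$ is taken in $\RP^1$. For $p\notin\RP^1$ the fibre is disjoint from $\R C$, and the orientations are not even defined there.

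\textbf{Step 1: a real form with divisor $D$.} Since $D\sim C+K_X$ is real, there is a real meromorphic $2$-form $\omega$ on $X$, with $\sigma^*\bar\omega=\omega$, whose divisor is $D-C$. In particular $\omega$ has a simple pole along $C$. On $\R X$, $\omega$ restricts to a real $2$-form. This form vanishes to even order along $D_0$ and to odd order along $D_1$, and has a simple pole along $\R C$. Its sign therefore gives an orientation of $\R X\setminus(\R C\cup\R D_1)$ that flips exactly when crossing $\R C\cup\R D_1$ at smooth points. Up to a global sign, this is the orientation in the statement.

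\textbf{Step 2: the residue form.} Let $\eta=\mathrm{Res}_C\,\omega$. By adjunction $K_C=(K_X+C)|_C$, so $\eta$ is a holomorphic $1$-form on $C$ with divisor $D|_C$; it is nonzero because $C\not\subset D$. Since $\omega$ is real, $\eta$ is real, so on $\R C\setminus D$ it is a nowhere vanishing real $1$-form and defines an orientation there. A local computation, writing $\omega=\frac{g(x,y)}{y}\,dx\wedge dy$ near a point of $\R C$ with $C=\{y=0\}$ and $g$ real, shows that the orientation given by $\eta$ is the boundary orientation induced by the orientation of Step 1.

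\textbf{Step 3: orientation from $f$.} Since $f$ is separating, $f|_{\R C}:\R C\to\RP^1$ is an unramified covering. Hence every fibre over $p\in\RP^1$ consists of $\deg f$ distinct real points, with $df\neq0$ at each of them. Moreover $f$ maps one half of $C\setminus\R C$ to the upper half-plane, so $f^*dz$ induces on $\R C$ the complex orientation, after fixing the sign convention. Consequently, at $x\in f^{-1}(p)\setminus D$ the real number $h(x)=\eta(x)/df(x)$ is positive exactly when the two orientations agree, and it vanishes exactly at points of $D$.

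\textbf{Step 4: the trace identity.} The trace $f_*\eta$ is a holomorphic $1$-form on $\P^1$: it is holomorphic on the complement of the branch locus, and it extends holomorphically because $\eta$ is holomorphic. Since $\P^1$ has no nonzero holomorphic $1$-forms, $f_*\eta=0$. Near $p\in\RP^1$, which is not a branch value, this reads
\[
\sum_{x\in f^{-1}(p)} h(x)=0 .
\]
If $f^{-1}(p)\setminus D\neq\varnothing$, the sum contains a nonzero real term. The sum vanishes, so some term must be negative, and at that point the two orientations differ.

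\textbf{Main obstacle.} The delicate part is the sign bookkeeping in Steps 2 and 3. One must check that the residue orientation is exactly the boundary orientation of the chosen orientation of $\R X\setminus(\R C\cup\R D_1)$, including near points of $D_1$ and $D_0$. One must also check that $f^*dz$ gives the complex orientation consistently on all components of $\R C$. Both the global sign of $\omega$ and the choice of complex orientation only flip all signs simultaneously, so the conclusion is unaffected by them.
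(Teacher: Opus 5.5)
Your proposal is correct. The paper gives no proof of this statement; it only recalls it from Orevkov \cite{O21}. Your route is the standard residue argument behind Orevkov's theorem: a real meromorphic $2$-form with divisor $D-C$, its Poincar\'e residue $\eta$ on $C$, and the vanishing of $\sum_{x\in f^{-1}(p)}\eta/df$ over a totally real fibre.

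The trace identity is equivalent to the residue theorem applied to $\eta/(f-p)$. That form has at most simple poles, all lying in $f^{-1}(p)$, and using it avoids having to extend $f_*\eta$ over branch values.

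Note also that a vanishing real sum with a nonzero term has terms of both signs. So your sign bookkeeping only needs to be uniform along the fibre, not correct in absolute terms. This symmetry is what lets the paper reverse the orientation freely in Lemma \ref{lem-dimD(p)}.
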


\section{On Manzaroli's result}\label{contre-exemple}
    In this section, we present a counter-example to \cite[Theorem 1.12]{M24} and we then explain what is the gap in the proof.

    \subsection{A counterexample}

    We present a counter-example to the following.

    \begin{thm}[\textbf{\cite[Theorem 1.12]{M24}}] \label{thmmatilde}
        Let $X$ be a smooth real algebraic surface and $C \subset X$ a non-singular separating real algebraic curve 
        with $\ell$ connected components in $\R C$. Let $D \in |C+K_X|$ be a real divisor such that $C$ does not have $D$ as a component. Write $D$ as $2D_0 + D_1$, where $D_0$ is real effective and $D_1$ is a reduced curve. Assume that
        \begin{itemize}
            \item[(1)] $\chi(\O_X) > 0$;
            \item[(2)] $(-K_X)^2 \geq 0$;
            \item[(3)] $-K_X D'\geq 0$ for any effective divisor $D'$;
            \item[(4)] $D_1$ is empty;
            \item[(5)] $-K_XD_0 > 0$;
            \item[(6)] $1 - \varepsilon + \dfrac{D_0^2 + D_0K_X}{2} < \left\lfloor \dfrac{\ell}{2} \right\rfloor < \dfrac{D_0^2-D_0K_X}{2}$, where $\varepsilon = \ell \mod 2$.
        \end{itemize}
        Then there exists no separating morphism of degree $\ell$ on $C$.
    \end{thm}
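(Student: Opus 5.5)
Suppose, for contradiction, that $f:C\to\P^1$ is a separating morphism of degree $\ell$. The plan is to build a real divisor $D=2D_0\in|C+K_X|$ that contradicts Theorem \ref{thm-orevkov} on one fiber of $f$. I would proceed in four steps, and I expect the last one to be the real difficulty.

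\textbf{Step 1: structure of the fibers.} Because $f$ is separating, $f|_{\R C}:\R C\to\RP^1$ is an unramified covering. Its total degree is $\ell$ and there are $\ell$ components, so each component of $\R C$ maps with degree $1$. Hence, for every $p\in\RP^1$, the fiber $f^{-1}(p)$ consists of exactly one real point on each component of $\R C$.

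\textbf{Step 2: counting disagreements.} Since $D_1=\varnothing$ by (4), the orientation of $\R X\setminus\R C$ in Theorem \ref{thm-orevkov} changes sign exactly when crossing $\R C$. It therefore induces a boundary orientation on every component of $\R C$. On each component, this orientation either agrees with the complex orientation or is opposite to it. Changing the global sign of the orientation of $\R X\setminus \R C$ swaps these two classes of components. So, after possibly changing that sign, at most $\lfloor \ell/2\rfloor$ components carry opposite orientations. Fix a generic $p\in\RP^1$. Let $x_1,\dots,x_s\in f^{-1}(p)$, with $s\leq\lfloor\ell/2\rfloor$, be the points lying on these components, and let $y_1,\dots,y_{\ell-s}$ be the remaining points of the fiber.

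\textbf{Step 3: constructing $D_0$.} The aim is a real curve $D_0'\sim D_0$ passing through $x_1,\dots,x_s$, avoiding all the $y_j$, and not containing $C$. Then $D=2D_0'\in|C+K_X|$ is real with $D_1=\varnothing$, and $f^{-1}(p)\setminus D$ is nonempty and consists only of points where the two orientations agree. This contradicts Theorem \ref{thm-orevkov}. For existence, I would use Riemann--Roch, assumption (1) and Serre duality. Condition (3) together with (5) should kill $h^0(K_X-D_0)$, which would give
\[ h^0(D_0)\ \geq\ \chi(\O_X)+\frac{D_0^2-D_0K_X}{2}\ \geq\ 1+\frac{D_0^2-D_0K_X}{2}.\]
By the right-hand inequality of (6), the real part of $|D_0|$ has projective dimension greater than $\lfloor\ell/2\rfloor\geq s$. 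Imposing the $s$ real linear conditions $x_i\in D_0'$ therefore leaves a nonempty real linear system.

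\textbf{Step 4 (main obstacle): avoiding the $y_j$ and $C$.} The left-hand inequality of (6) is meant for this step. The idea is to compare with the system $|K_X-D_0|\cong |D_0-C|^{\vee}$ via $2D_0\sim C+K_X$. Its Riemann--Roch number is $(D_0^2+D_0K_X)/2$, together with the correction $1-\varepsilon$. The inequality should show that these conditions cannot force the subsystem through every point of the fiber, and that they cannot force it to contain $C$. Then a generic member through the $x_i$ avoids some, hopefully all, of the $y_j$. I would try to argue that this works for a generic choice of $p$, by moving $p$ along $\RP^1$.

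This is the delicate point. The points of a fiber need not impose independent conditions on $|D_0|$. Moreover, the base locus of the subsystem through $x_1,\dots,x_s$ may contain some $y_j$ for every $p$. The genericity argument has to control this base locus. Closing this step is where I would focus the effort, because any gap in the proof would most likely sit here.
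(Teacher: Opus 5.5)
There is a genuine gap, and no argument can close it: the statement is false, and your Step 4 is exactly where any proof must break. Take $X=\P^1\times\P^1$ and let $C$, of bidegree $(4,4)$, be a small perturbation of the union of four real curves $(x_0+ax_1)y_1-(x_1-ax_0)y_0=0$ of bidegree $(1,1)$, for distinct real values of $a$. Then $\R C$ consists of $\ell=4$ parallel non-contractible circles on the torus $\R X$. Take $D=2D_0$ with $D_0$ a real curve of bidegree $(1,1)$. The hypotheses all hold:
\begin{itemize}
\item $\chi(\O_X)=1$ and $K_X^2=8$;
\item $-K_X$ is nef, and $D_1=\varnothing$;
\item $-K_XD_0=4>0$;
\item $D_0^2=2$ and $D_0K_X=-4$, so condition (6) reads $0<2<3$.
\end{itemize}
Nevertheless, each projection $X\to\P^1$ restricts to a separating morphism $C\to\P^1$ of degree $4=\ell$.

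Your Steps 1--3 go through in this example, and it shows precisely why Step 4 fails. Take $f$ to be the first projection.
\begin{itemize}
\item A fiber $f^{-1}(p)$ has one point on each component, all lying on the ruling $L_p=\{p\}\times\P^1$.
\item The complex orientations of the four components are parallel, as they must be since $f$ is separating. The boundary orientation induced from $\R X\setminus\R C$ alternates from one component to the next. So $s=2$, and $x_1,x_2$ lie on two non-adjacent components.
\item Since $D_0\cdot L_p=1$, every $D_0'\in|D_0|$ through $x_1$ and $x_2$ contains $L_p$, hence also $y_1$ and $y_2$. This holds for every $p\in\RP^1$ and for either choice of global sign.
\end{itemize}
So no member of $|D_0|$ through the $x_i$ misses a $y_j$, in accordance with Orevkov's theorem. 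The base-locus issue you flagged is therefore the heart of the matter, not a technicality. Nothing in (1)--(6) rules out a fixed component through the whole fiber.

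Your idea of extracting the left inequality of (6) from $|K_X-D_0|$ is also not how that inequality works. In Manzaroli's proof it is meant to contradict $D_0^2\geq \ell+h-1$, obtained by intersecting two members of the system $\mathcal{D}(p)$ of curves in $|D_0|$ through all of $f^{-1}(p)$. That count fails for the same reason here, since all such members share $L_p$.

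A correct statement needs extra input. The corrected theorems for $\P^2$ and $\F_n$ handle a common component by letting $p$ vary. The moving part of the resulting pencil must then meet every oval at least twice. This leads to the stronger hypothesis $\ell_c-\ell_n-3k+\varepsilon>D_0^2+K_XD_0+2$, which fails in the example above because $\ell_c=0$ and $\ell_n=4$.
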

    
    Let $X=\P^1\times\P^1$ endowed with its standard real structure, for which its real part is homeomorphic to a torus. 
    The canonical divisor $K_X$ is of bi-degree $(-2,-2)$. In particular, one has
\begin{itemize}
    \item[(1)] $\chi(\O_X)=1\geq1;$
    \item[(2)] $(K_X)^2=(-2)\cdot(-2)+(-2)\cdot(-2)=8\geq0$;
    \item[(3)] $-K_X\cdot D\geq0$ for all effective divisors $D$.
\end{itemize}

    We now consider a smooth real algebraic curve $C$ of bi-degree $(4,4)$, whose real part consists of $4$ disjoint connected components representing the same non-trivial class in $H_1(X(\R);\Z)$ and given by a small perturbation of a union of 4 curves with equation $(x_0+ax_1) y_1-(x_1-ax_0)y_0=0$ in homogeneous coordinates $([x_0:x_1],[y_0:y_1])$ for distinct real values of $a$, see Figure~\ref{Fig1}. The canonical divisor $K_X$ being of bi-degree $(-2,-2)$, the linear system $|C+K_X|$ consists of all curves of bi-degree $(2,2)$. We then set $D=2D_0\in|C+K_X|$ to be any real divisor whose real part does not intersect $\R C$. Hypothesis (4) is true since we have taken $D=2D_0$ and (5) is straightforward. Here, we have $\ell=4$ components of $\R C$ and so the parity $\varepsilon\in\{0,1\}$ of $\ell$ is equal to $0$. So we can compute the left-hand side in (6) to be $1-\dfrac{2-4}{2}=0$ and the right-hand side to be $\dfrac{2+4}{2}=3$. Hence hypothesis (6) requires that $0<\left\lfloor \frac{\ell}{2}\right\rfloor<3$, which is true since $\frac{\ell}{2}=2$. If the theorem were true, then it would imply that the curve $C$ does not admit any separating morphism of degree $\ell=4$. However, the two projections $\P^1\times\P^1\to\P^1$ induce by restriction separating morphisms $C\to\P^1$ and both of them are of degree $4$.

\begin{figure}[h!] 
\centering
\begin{tikzpicture}[thick,font=\footnotesize,scale=0.7]

\draw[thick](-4,-4)--(4,-4)--(4,4)--(-4,4)--(-4,-4);


\draw[ultra thick,dotted,orange](0.5,-4)node[below]{$p$}--(0.5,-3.5)node[left]{$p_1$}--(0.5,-1.5)node[left]{$p_2$}--(0.5,0.5)node[left]{$p_3$}--(0.5,2.5)node[left]{$p_4$}--(0.5,4);

\draw[blue,thick](-4,-4)--(4,4);
\draw[red,ultra thick,->](-0.01,-0.01)--(0,0);

\draw[blue,thick](-4,-2)--(2,4);
\draw[red,ultra thick,->](-1.01,0.99)--(-1,1);
\draw[blue,thick](2,-4)--(4,-2);
\draw[red,ultra thick,->](2.99,-3.01)--(3,-3);

\draw[blue,thick](-4,0)--(0,4);
\draw[red,ultra thick,->](-2.01,1.99)--(-2,2);
\draw[blue,thick](0,-4)--(4,0);
\draw[red,ultra thick,->](1.99,-2.01)--(2,-2);

\draw[blue,thick](-4,2)--(-2,4);
\draw[red,ultra thick,->](-3.01,2.99)--(-3,3);
\draw[blue,thick](-2,-4)--(4,2);
\draw[red,ultra thick,->](0.99,-1.01)--(1,-1);

\end{tikzpicture}
\caption{The real part of $C$ (in blue) in the torus $\RP^1\times\RP^1$ along with a choice of complex orientation in red and the inverse images $p_i$'s of some $p\in\RP^1$ by the separating morphism induced by $\P^1\times\P^1\to\P^1$.}
\label{Fig1}
\end{figure}

\begin{rmk}
    The exact same construction also yields a counter-example of bi-degree $(6,6)$ on the hyperboloid $X$. However, it does not generalize to bi-degrees $(2s,2s)$ for $s\geq4$. In fact, the hypothesis (6) would not be satisfied since the the left-hand side would be equal to $1-\varepsilon+\frac{2(s-1)^2-4(s-1)}{2}$ which is greater or equal than $\left\lfloor\frac{\ell}{2}\right\rfloor=s$ starting from $s=4$. This construction also provides counter-examples to Theorem 1.14 from \cite{M24} (the case where $D_1$ is non-empty) in bi-degrees $(3,3),(5,5)$ and $(7,7)$ but not higher.
\end{rmk}

\begin{rmk}
    The kind of separating morphism that we constructed is already well-known. For instance, it already appears in \cite[Example 2.13]{KS17}, and is used in the classification established in \cite{O25}.
\end{rmk}

\subsection{What is the problem?}

In the proof of \cite[Theorem 1.12]{M24}, one has to compute the self-intersection of the curves class $D_0$. This is done by exhibiting two curves $A$ and $B$ in this class which both pass through all points of the preimage $f^{-1}(p)$ and through another $h-1$ points in generic position, where $h$ is the dimension of the linear system of curves in $|D_0|$ passing through all points of $f^{-1}(p)$. The conclusion is then that $D_0^2=A\cdot B\geq \ell+h-1 = \deg(f)+h-1$ and the final result is obtained using Theorem \ref{thm-orevkov} to get a lower bound for $h$. However, if $A$ and $B$ share a common component, then one cannot compute $D_0^2$ that way. Indeed, in this counterexample one can see that the dimension of the linear system of curves in $|D_0|$ passing through all points of $f^{-1}(p)$ is $1$ but all curves in this linear system have a common component : the line $\{p\}\times\P^1$. Now, the number of connected components of $\R C$ is $\ell=4$ and the self-intersection $D_0^2$ is only 2 so the inequality $D_0^2\geq \ell+h-1$ does not hold.

\section{Corrected statements} \label{sec-statements}

\subsection{A lemma}

We start with the following lemma, which can already be found in Manzaroli's proof.

\begin{lem}\label{lem-dimD(p)}
    Let $X$ be a smooth real algebraic surface, and $C\subset X$ be a separating, irreducible and non-singular curve that admits a separating morphism $f$ of degree $\ell+k$. Let $D\in|C+K_X|$ and write $D=2D_0+D_1$ with $D_0$ effective and $D_1$ effective and reduced. Let $p\in\RP^1$. If there exists $D'_1\in|D_1|$ such that $D_1'$ passes by none of the points of the preimage $f^{-1}(p)$, then the dimension of the linear system $\mathcal{D}(p)\subset|D_0|$ of all divisors containing the preimage $f^{-1}(p)$ is at least $\dim|D_0|-\left\lfloor\dfrac{\ell+k}{2}\right\rfloor$.
\end{lem}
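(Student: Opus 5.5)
The plan is to use Orevkov's theorem (Theorem \ref{thm-orevkov}) to bound how many points of $f^{-1}(p)$ impose independent conditions on $|D_0|$. Since $f$ is separating and $p\in\RP^1$, the fibre $f^{-1}(p)$ consists of $\ell+k$ real points of $\R C$, counted with multiplicity. We may move $p$ slightly, or assume it generic as the statement allows, so that these points are distinct. We fix a complex orientation of $\R C$. Each point $x\in f^{-1}(p)$ then gets a sign: $+$ if the complex orientation agrees at $x$ with the boundary orientation coming from a chosen divisor, and $-$ otherwise. We split the fibre into the two subsets $P_+$ and $P_-$ according to the sign that $f$ (through its orientation on $\RP^1$) induces. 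Since $f$ is a covering of $\RP^1$ on each component, all fibre points on a given component are mapped with the orientation-coherent sign, so the relevant quantity is really the comparison with the divisor's orientation.

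The key step is as follows. Let $S\subset f^{-1}(p)$ be whichever of the two natural subsets has at most $\lfloor(\ell+k)/2\rfloor$ points. Consider divisors $E\in|D_0|$ through $S$; imposing $S$ costs at most $|S|\le\lfloor(\ell+k)/2\rfloor$ conditions, so these form a linear system of dimension at least $\dim|D_0|-\lfloor(\ell+k)/2\rfloor$. I claim every such $E$ automatically contains all of $f^{-1}(p)$, so this system is contained in $\mathcal D(p)$, which gives the bound. To prove the claim, take a real such $E$ and set $D=2E+D'_1\in|C+K_X|$. If $C$ is a component of $E$, then $E$ contains the whole fibre and there is nothing to prove. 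Otherwise we apply Theorem \ref{thm-orevkov} to $D$. Crossing $2E$ does not change the orientation of $\R X\setminus(\R C\cup\R D'_1)$, so the orientation induced on $\R C\setminus D$ depends only on $D'_1$ and $C$. Since $D'_1$ avoids the fibre, this induced orientation is well-defined at every point of $f^{-1}(p)$, independently of $E$. We choose $S$ to be exactly the set of fibre points where this orientation differs from the complex orientation, or, if that set is the larger one, we reverse the roles using the opposite complex orientation; the theorem holds for either choice of complex orientation. Then the points of $f^{-1}(p)\setminus D$ all lie outside $S$, where the two orientations agree. Theorem \ref{thm-orevkov} therefore forces $f^{-1}(p)\setminus D=\varnothing$, so $f^{-1}(p)\subset E$.

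Finally, the argument above handles only real divisors $E$, while the system of divisors in $|D_0|$ through $S$ is a complex linear system. Since $S$ is real, this system is defined over $\R$ and its real points are Zariski dense in it. The condition of containing $f^{-1}(p)$ is Zariski closed, so it extends from the real members to the whole system. The main obstacle is making the choice of $S$ symmetric. Reversing the complex orientation flips which points disagree, so one of the two complementary sets has size at most $\lfloor(\ell+k)/2\rfloor$, and I must check that Theorem \ref{thm-orevkov} is indeed insensitive to which complex orientation is fixed. Fibres with multiple points also need care; I would first try genericity of $p$ to avoid them.
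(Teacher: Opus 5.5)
Your proposal is correct and is essentially the paper's proof. You take $S$ to be the fibre points where the orientation induced by $C\cup D_1'$ disagrees with a fixed complex orientation, reversing if needed so that $|S|\le\lfloor(\ell+k)/2\rfloor$, and Theorem~\ref{thm-orevkov} applied to $2E+D_1'$ shows that every $E\in|D_0|$ through $S$ contains $f^{-1}(p)$. Your extra care with the case $C\subset E$ and the Zariski-density passage from real to complex members fills in details the paper leaves implicit. Your two worries are unnecessary. You need neither genericity nor a move of the fixed point $p$, because a separating morphism is unramified along $\R C$, so the fibre is already $\ell+k$ distinct points. And Theorem~\ref{thm-orevkov} as stated already holds for either choice of complex orientation.
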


\begin{proof} 
    The divisor $D_1'$ induces an orientation of $\R X\setminus\left(C\cup D_1'\right)$ which, in turn, induces an orientation on $\R C\setminus D_1'$. Let $V(p)$ be the subset of $f^{-1}(p)$ where this orientation differs from a fixed complex orientation of $\R C$. Up to reversing this orientation, one can always assume that $V(p)$ contains at most $\left\lfloor\frac{\ell+k}{2}\right\rfloor$ points. By Theorem \ref{thm-orevkov}, if a curve $D_0'\in|D_0|$ passes by all the points of $V(p)$, then it passes by all the points of $f^{-1}(p)$, hence the result.
\end{proof}

\subsection{For the projective plane $\P^2$}

\begin{thm}\label{thm-P2}
    Let $C\subset \P^2$ be an irreducible non-singular separating curve of degree $m$ whose real part has $\ell$ connected components. Choose $D\in|C+K_{\P^2}|$ (\ie $D$ is a curve of degree $d=m-3$) and a decomposition $D=2D_0+D_1$ such that $D_0$ is effective (of degree $d_0$) and $D_1$ is a reduced curve (of degree $d_1$). Let $\ell_n= m \mod 2$ be the number of pseudo-lines of $\R C$ and $\ell_c=\ell-\ell_n$ be the number of ovals of $\R C$. Let $k\geq0$ be an integer and $\varepsilon = \ell+k \mod 2$. If
   \begin{enumerate}[label=(\roman*)]
       \item $\ell+k-\varepsilon < d_0^2+3d_0$
       \item $\ell_c-\ell_n-3k + \varepsilon  > d_0^2-3d_0+2$
   \end{enumerate}
    then there exists no separating morphism $f:C\to\P^1$ of degree $\ell+k$.
\end{thm}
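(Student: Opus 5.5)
We argue by contradiction: assume that a separating morphism $f:C\to\P^1$ of degree $\ell+k$ exists, and fix a generic point $p\in\RP^1$. The fibre $f^{-1}(p)$ then consists of $\ell+k$ distinct real points. Since $f|_{\R C}$ is an unramified covering of $\RP^1$, each of the $\ell$ components of $\R C$ contains at least one of these points. Consequently at most $k$ components carry more than one point, and at least $\ell_c-k$ ovals carry exactly one point of $f^{-1}(p)$.

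\emph{Step 1: applying Lemma \ref{lem-dimD(p)}.} The system $|D_1|$ is the complete system of plane curves of degree $d_1$, which is base-point free. Hence a generic real reduced $D_1'\in|D_1|$ avoids the finite set $f^{-1}(p)$, and $2D_0+D_1'$ is still a real divisor in $|C+K_{\P^2}|$. Lemma \ref{lem-dimD(p)} then gives
\[ h:=\dim\mathcal D(p)\ \geq\ \frac{d_0(d_0+3)}{2}-\frac{\ell+k-\varepsilon}{2}, \]
and hypothesis (i) says precisely that $h\geq 1$. So $\mathcal D(p)$ contains a pencil of curves of degree $d_0$ through all of $f^{-1}(p)$, and we may additionally impose $h-1$ generic points.

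\emph{Step 2: splitting off the fixed part, which is the main obstacle.} Section \ref{contre-exemple} shows that we cannot simply write $d_0^2=A\cdot B$. Instead, take two members $A,B$ of this pencil and write $A=F+A'$, $B=F+B'$, where $F$ is the (real) fixed part, of degree $e$, and $A',B'$ have no common component, so they have degree $d_0-e$. Split $f^{-1}(p)=P_F\sqcup P'$, where $P_F=f^{-1}(p)\cap F$. Bezout then gives
\[ (d_0-e)^2\ \geq\ |P'|+(h-1). \]
It remains to bound $|P_F|$, and this is where the ovals enter. If $F$ passes through the unique fibre point on an oval $\mathcal O$, then $F$ meets $\mathcal O$ in an even number of real points counted with multiplicity, because $\mathcal O$ is null-homologous. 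Hence $F$ meets $\mathcal O$ at least twice, and every such point of $P_F$ costs $F$ an additional intersection with $C$. A pseudo-line or a component with several fibre points gives no such gain. This is why $\ell_n$ and $3k$ are subtracted in (ii): each of the at most $k$ exceptional components may absorb fibre points without the parity gain. The constraint on $F$ is then that a real curve of degree $e$ passes through many points chosen one per oval while meeting each of those ovals twice. The first thing I would try is to combine $F\cdot C=em$ with the bound on the number of real components of $F$ given by its arithmetic genus, $(e-1)(e-2)/2+1$. I would also vary $p$ along $\RP^1$: the fixed component $F=F(p)$ must move with $p$, and a family of such curves sweeping every oval is very restrictive.

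\emph{Step 3: closing the inequalities.} Combining the two estimates, the total number of fibre points that can be accommodated is bounded by a quantity in $d_0$ of the shape $d_0^2-3d_0+2+\ell_n+3k-\varepsilon$. The main term $(d_0-1)(d_0-2)=d_0^2-3d_0+2$ is twice the genus of a plane curve of degree $d_0$, which suggests that Harnack's bound on the fixed and moving parts is the right tool. Hypothesis (ii) asserts exactly that $\ell_c$ exceeds this quantity, so some oval containing a single fibre point can be handled neither by $F$ nor by $A'\cap B'$, which is a contradiction. I expect Step 2, namely controlling the fixed part $F$ sharply enough to recover exactly the constant in (ii), to be the delicate point; Steps 1 and 3 are bookkeeping.
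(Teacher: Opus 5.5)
Your Step 1 is fine and agrees with the paper. Steps 2 and 3, however, are not a proof. You never obtain a bound on $|P_F|$, and the inequality in Step 3 is asserted (``of the shape \dots'') rather than derived.

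Moreover, the single-fibre route you sketch cannot close. For one fixed $p$, the fixed part $F$ of the pencil in $\mathcal D(p)$ may contain all of $f^{-1}(p)$; this is exactly the phenomenon of Section~\ref{contre-exemple}. The only constraint you extract is $2\cdot\#\{\text{one-point ovals met by }F\}\leq F\cdot C=em$. This involves $m=2d_0+d_1+3$, and since $d_1$ is free, $m$ can be arbitrarily large compared to $d_0$. So no such estimate yields the constant in (ii), which depends on $d_0$ alone. Harnack's bound plays no role either: $d_0^2-3d_0+2$ comes out of elementary arithmetic. Your inequality $(d_0-e)^2\geq|P'|+h-1$ also needs care, since some of the $h-1$ auxiliary points may be forced onto $F$.

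The missing idea is to turn the variation in $p$ into a genuine linear pencil and to take the parity gain from its \emph{moving} part, not from the fixed part.

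\begin{itemize}
\item Fix a set $\H$ of $h$ generic real points off $C$. For generic $p$ there is a unique $E_p\in\mathcal D(p)$ through $\H$.
\item $(E_p)_{p\in\P^1}$ is a linear pencil. Indeed, the member with parameter $p$ of the pencil spanned by $E_{[0:1]}$ and $E_{[1:0]}$ contains both $\H$ and $f^{-1}(p)$, so it equals $E_p$.
\item Write $E_p=E+F_p$ with $E$ the fixed component. Because $f^{-1}(p)$ moves with $p$, the restriction of $F_p$ to $C$ is $f^{-1}(p)$ plus a fixed effective divisor.
\item Hence $F_p$ meets every component of $\R C$, and each oval at least twice by parity. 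So $F_p\cdot C\geq 2\ell_c+\ell_n$, while only $\ell+k$ of these intersection points move.
\item The pencil $(F_p)$ therefore has at least $\ell_c-k$ base points on $C$, plus the $h'$ points of $\H$ not on $E$. This gives $F_p^2\geq \ell_c-k+h'$.
\item Together with $E^2\geq h-h'-1$ and $E\cdot F_p\geq 0$, you get $d_0^2\geq \ell_c-k+h-1$.
\item Inserting $2h\geq d_0^2+3d_0-\ell-k+\varepsilon$ yields exactly $d_0^2-3d_0+2\geq \ell_c-\ell_n-3k+\varepsilon$, contradicting (ii).
\end{itemize}

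When $\mathcal D(p)$ has no fixed component, the direct count $d_0^2\geq \ell+k+h-1$ is even stronger. In particular, the $3k$ in (ii) is $2k$ from the base-point count plus $k$ from the lower bound on $h$. It is not a count of exceptional components ``absorbing'' fibre points, as you suggested. Your remark about varying $p$ points in the right direction, but it has to be realized by this pencil construction, with the bound coming from $F_p^2$.
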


\begin{proof} 
    Assume that there exists a separating morphism $f:C\to\P^1$ of degree $\ell+k$ satisfying~$(i)$, \ie such that $\ell+k-\varepsilon<d_0^2+3d_0$. We will show that $(ii)$ cannot be satisfied.
    
    Choose a point $p\in\RP^1$ and consider the linear system $\mathcal{D}(p)\subset|D_0|$ of effective divisors linearly equivalent to $D_0$ and passing through all the points of $f^{-1}(p)$. By Lemma \ref{lem-dimD(p)}, the dimension $h$ of $\mathcal{D}(p)$ is at least $\dim| D_0|-\dfrac{\ell+k-\varepsilon}{2}$. 
    One has $\dim|D_0| =  \dfrac{d_0^2+3d_0}{2}$ so hypothesis~$(i)$ on $\ell+k$ gives 
    \begin{equation}\label{eq-hminor-P2}
    h\geq\frac{d_0^2+3d_0-\ell-k+\varepsilon}{2} >0 .  
    \end{equation} 
    
    There are now two possibilities. 
    First, if the curves of $\mathcal{D}(p)$ do not share a common component, then one fixes $h-1$ points in $\P^2$ so that there is a pencil of curves of $\mathcal{D}(p)$ passing through these points and such that any two curves of this pencil do not share any common component. In that case, two of these curves intersect transversely in at least $\ell+k+h-1$ points, so $d_0^2\geq \ell+k+h-1$. Multiplying both sides by 2 and using Inequality (\ref{eq-hminor-P2}), one deduces that $2d_0^2\geq \ell+k+d_0^2+3d_0+\varepsilon$ and hence $\ell+k+\varepsilon \leq d_0^2 -3d_0$, which implies in particular
    \[ \ell_c-\ell_n-3k+\varepsilon \leq d_0^2-3d_0+2 ,\]
    contradicting $(ii)$.
    \newline
    
    Second, if the curves of $\mathcal{D}(p)$ all share a common component, then one can now vary $p$ in order to compute self-intersection numbers. Fix a subset $\H\subset \R\P^2$ of $h$ points in generic position and outside the curve $C$, so that for every generic $p\in\P^1$ there is exactly one curve $E_p\in\mathcal{D}(p)$ passing through these $h$ points. Now, one checks that $(E_p)_p$ is a pencil.     
    By definition of $E_p$, for all $p\in\P^1$ one has $f^{-1}(p) \subset C\cap E_p$. Write $p=[\lambda_0:\lambda_1]$ and let $P_0$ and $P_1$ be two polynomials such that $V(P_0)= E_{[0:1]}$ and $V(P_1)=E_{[1:0]}$ (both polynomials $P_1$ and $P_2$ can be taken with homogeneous coordinates in $\P^2$). The pencil $\left(C\cap V(\lambda_1P_0+\lambda_0P_1)\right)_{[\lambda_0:\lambda_1]}$ contains the pencil $(f^{-1}(p))_p$, because it contains it at the two points $p=[0:1]$ and $p=[1:0]$. In particular, the curve $V(\lambda_1P_0+\lambda_0P_1)$ passes through all the points of $f^{-1}([\lambda_0:\lambda_1])$ but also through all the fixed $h$ points, because both $V(P_0)$ and $V(P_1)$ do. As a consequence, $V(\lambda_1P_0+\lambda_0P_1) = E_{[\lambda_0:\lambda_1]}$, implying that $(E_p)_p$ is a pencil. 

    We would like now to compute $E_p^2$ by counting the number of intersection points between $E_p$ and another element $E_q$ of the pencil. However, this strategy may fail if the pencil $(E_p)_p$ has a common component. Denote by $E$ the potential common component of this pencil, and let $\H'\subset\H$ be the set of the $h'$ points of $\H$ through which $E$ does not pass. As divisors, one has $E_p=E+F_p$, where $(F_p)_p$ is a pencil without common component. We get 
    \[ E_p^2 = E^2 + 2E\cdot F_p + F_p^2 \]
    and we will bound by below each of the three terms.

    Since $(F_p)_p$ is a pencil without common component, then one can compute the self-intersection $F_p^2$ as the intersection of two elements of the pencil $F_p$ and $F_q$. Since the morphism $f:C\to\P^1$ is given by the intersection of the curve $C$ with the pencil $(F_p)_p$ it means that this pencil has at least $F_p\cdot C-\deg(f)$ base points on the curve $C$ (note that $F_p$ and $C$ generically do not share any common component because otherwise one of the component of $C$ would be a common component of the pencil $(F_p)_p$). Moreover, the curves $F_p$ intersect each of the components of $\R C$, so that $F_p\cdot C$ is at least $2\ell_c+\ell_n$ (each oval is intersected at least twice). Hence the pencil $(F_p)_p$ has at least $2\ell_c+\ell_n-\ell-k=\ell_c-k$ base points on the curve $C$. Adding the $h'$ base points in $\mathcal{H'}$ which are fixed outside of $C$, the pencil $(F_p)_p$ has at least $\ell_c-k+h'$ base points, so $F_p^2\geq \ell_c-k+h'$.

    Now, notice that $E^2 \geq h-h'-1$. In fact, the curve $E$ is determined by $h-h'$ points in generic position, so its self-intersection is at least $h-h'-1$ (the genericity ensures that by varying one of the fixed points, one gets a pencil of curves linearly equivalent to $E$, without common component and passing by the $h-h'-1$ other fixed points). Last, since $E$ and $F_p$ do not share a common component then one has $E\cdot F_p \geq 0$ and we get 
    $$ d_0^2=E_p^2\geq E^2+F_p^2\geq h-h'-1+\ell_c-k+h'=\ell_c-k+h-1.$$
    Finally, by multiplying this inequality by 2 and applying Inequality (\ref{eq-hminor-P2}), one gets 
    \[ 2d_0^2\geq 2\ell_c-2k+d_0^2+3 d_0-(\ell+k-\varepsilon)-2 \] which yields \[ d_0^2-3d_0+2 \geq \ell_c-\ell_n-3k+\varepsilon,\] contradicting $(ii)$.
\end{proof}

\begin{ex}
   Consider plane curves of degree $7$. We have $g+1=16$, hence the real part of a separating curve $C$ has an even number $\ell$ of connected components. One has $\ell_n=1$ so $\ell_c=\ell-1$. To apply Theorem \ref{thm-P2} we choose a divisor $D$ of degree 4. It decomposes as $D=2D_0+D_1$ with $D_1=0$ and $D_0$ a degree 2 divisor. The hypothesis on $(\ell,k)$ are
  \begin{enumerate}[label=$(\roman*)$]
      \item $\ell+k-\varepsilon < d_0^2+3d_0 = 10$
      \item $\ell-2-3k + \varepsilon  > d_0^2-3d_0+2 =0$.
  \end{enumerate}
   Hence for $(\ell,k) \in \{ (6,0),(6,1), (8,0),(8,1) \}$ there is no separating morphism of degree $\ell+k$ on a degree $7$ curve with $\ell$ connected components in its real part, \ie the separating gonality of a degree $7$ plane curve with $6$ (resp. $8$) connected components in its real part is at least $8$ (resp. $10$).       
   (If $\ell=2,4$ then there is no separating morphism of degree $\ell$ because the gonality of $C$ is 6.) 
\end{ex}

\begin{ex}
Consider plane curves of degree $9$. We have $g+1 = 29$, hence a separating curve $C$ has an odd number $\ell$ of connected components in its real part. One has $\ell_n =1$ and $\ell_c = \ell-1$. To apply Theorem \ref{thm-P2}, we need to choose a divisor $D$ of degree $6$.

The divisor $D$ can be decomposed as $2D_0 + D_1$, where $D_1 = 0$ and $D_0$ is a degree $d_0 = 3$ divisor. The hypotheses on $(\ell, k)$ are 
\begin{enumerate}[label=$(\roman*)$]
       \item $\ell+k-\varepsilon < d_0^2+3d_0 = 18$
       \item $\ell-2-3k + \varepsilon  > d_0^2-3d_0+2 = 2$.
   \end{enumerate}
Hence, for 
\[ (\ell, k) \in \{ (9,0), (9,1), (11,0), (11,1), (11,2), (13,0), (13,1), (13,2), (15,0),(15,1),(15,2), (17,0) \}, \] there is no separating morphism of degree $\ell+k$ from a curve of degree $9$ with $\ell$ connected components in its real part.

Another possible decomposition is $D = 2D_0+D_1$ where the degree of $D_1$ is $d_1 = 2$ and the degree of $D_0$ is $d_0 = 2$. The hypotheses on $(\ell, k)$ are 
\begin{enumerate}[label=$(\roman*)$]
       \item $\ell+k-\varepsilon < d_0^2+3d_0 = 10$
       \item $\ell-2-3k + \varepsilon  > d_0^2-3d_0+2 = 0$.
\end{enumerate}
This prohibits the pair $(7,1)$, which was not prohibited by the gonality or by the previous application of Theorem \ref{thm-P2}.

These two applications of the theorem allow one to obtain that if the real part of the curve has $15$ connected components or less, then the curve does not admit a separating morphism of degree $\ell$ or $\ell+1$.
\end{ex}

A systematic application of this observation gives the following result.

\begin{cor}\label{coro-P2}
    Let $C\subset \P^2$ be an irreducible non-singular separating curve of degree $m$ whose real part has $\ell$ connected components. Let $k\geq0$ be an integer and $\varepsilon = \ell+k \mod 2$. If 
    \begin{enumerate}
        \item[(a)] either $m$ is odd and 
        \[ k^2 +2k+3-\varepsilon \leq \ell \leq  \frac{(m-3)(m+3)}{4} -k-1+\varepsilon\]

        \item[(b)] or $m$ is even and 
        \[ k^2 + 2(1-\varepsilon)k +\varepsilon +1 \leq  \ell \leq  \frac{(m-4)(m+2)}{4} -k-1+\varepsilon\]
    \end{enumerate}
    then there exists no separating morphism $f:C\to\P^1$ of degree $\ell+k$.

    In particular, if $k_{\max}$ is the maximal $k$ such that the previous hypotheses are satisfied, then the separating gonality of $C$ is at least $\ell+k_{\max}+1$.
\end{cor}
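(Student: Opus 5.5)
The plan is to obtain the corollary directly from Theorem~\ref{thm-P2}, by choosing the degree $d_0$ of $D_0$ according to $(\ell,k)$. First I check that every $d_0$ with $0\leq 2d_0\leq m-3$ can be realized. Take $D_0$ to be any real curve of degree $d_0$, and $D_1$ a union of $m-3-2d_0$ distinct real lines, which is reduced. Then $D=2D_0+D_1\in|C+K_{\P^2}|$.

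The number of pseudo-lines is $\ell_n=m \bmod 2$, so $\ell_c-\ell_n=\ell-2\ell_n$. Since $\ell$ is an integer, hypotheses $(i)$ and $(ii)$ of Theorem~\ref{thm-P2} then read $L(d_0)\leq \ell\leq U(d_0)$, where
\[ U(d_0)=d_0^2+3d_0-k-1+\varepsilon,\qquad L(d_0)=d_0^2-3d_0+3k+3+2\ell_n-\varepsilon. \]
It therefore suffices to show that the range of $\ell$ given in (a), resp. (b), is contained in the union of the integer intervals $[L(d_0),U(d_0)]$ over admissible $d_0$.

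The key step is an overlap estimate. The intervals for $d_0$ and $d_0+1$ leave no gap as soon as $L(d_0+1)\leq U(d_0)+1$. This reduces to
\[ 4d_0\geq 4k+1+2\ell_n-2\varepsilon . \]
\begin{itemize}
\item For $m$ odd ($\ell_n=1$) the condition holds for all $d_0\geq k+1$. I then take $d_0$ ranging from $k+1$ to $(m-3)/2$. The union is the interval from $L(k+1)=k^2+2k+3-\varepsilon$ to $U\bigl(\tfrac{m-3}{2}\bigr)=\tfrac{(m-3)(m+3)}{4}-k-1+\varepsilon$, which is exactly (a).
\item For $m$ even ($\ell_n=0$) the condition holds for $d_0\geq k+1-\varepsilon$. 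I take $d_0$ from $k+1-\varepsilon$ to $(m-4)/2$. A direct computation gives $L(k+1-\varepsilon)=k^2+2(1-\varepsilon)k+\varepsilon+1$ in both cases $\varepsilon=0,1$, and $U\bigl(\tfrac{m-4}{2}\bigr)=\tfrac{(m-4)(m+2)}{4}-k-1+\varepsilon$. This is (b).
\end{itemize}
If the starting value of $d_0$ exceeds the maximal one, the stated range is empty (or already covered by the single largest $d_0$), so nothing needs proving. The degenerate case $d_0=0$ is harmless, because then $(i)$ cannot hold.

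For the final statement about the separating gonality, a separating morphism has degree at least $\ell$, so I must exclude every degree $\ell+k$ with $0\leq k\leq k_{\max}$. I interpret $k_{\max}$ as the largest $k$ such that the hypotheses hold for all $k'\leq k$. Alternatively, I check that the lower bound increases with $k$ and the upper bound decreases with $k$, even though $\varepsilon$ alternates in parity. For example, the lower bound grows by at least $2k+2$ while $\varepsilon$ changes by at most $1$, and the upper bound drops by $1$ while $\varepsilon$ changes by $\pm1$. This gives the claim that validity at $k_{\max}$ implies validity at every smaller $k$.

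The main obstacle is this monotonicity in $k$, together with getting the parity bookkeeping of the starting value $d_0=k+1-\varepsilon$ exactly right. The rest is routine algebra.
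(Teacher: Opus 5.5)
Your proposal is correct and matches the paper's proof: for each admissible $d_0$ you turn (i)--(ii) of Theorem~\ref{thm-P2} into an integer interval for $\ell$, and you show consecutive intervals abut once $d_0\geq k+1$ (resp.\ $d_0\geq k+1-\varepsilon$ in case (b)). You then chain these intervals from the smallest to the largest admissible $d_0$, and get the gonality bound from monotonicity in $k$. One small slip: your heuristic ``the lower bound grows by at least $2k+2$'' does not apply in case (b), where $\varepsilon$ multiplies $k$; going from $k$ to $k+1$ the increase is only $2$ when $\ell+k$ is even. Checking both parities directly still gives a strict increase, so your conclusion stands.
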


\begin{proof}
\begin{enumerate}
\item[(a)] If $m$ is odd then $\R C$ has one pseudo-line, \ie $\ell_n=1$ and $\ell_c=\ell-1$. 
Choose $D\in|C+K_{\P^2}|$ (\ie $D$ is a curve of degree $d=m-3$) and a decomposition $D=2D_0+D_1$ as in \cite{O21}. 
By Theorem~\ref{thm-P2} there is no separating morphism of degree $\ell+k$ if
   \begin{enumerate}[label=$(\roman*)$]
       \item $\ell+k-\varepsilon < d_0^2+3d_0$
       \item $\ell-2-3k + \varepsilon  > d_0^2-3d_0+2$
   \end{enumerate}
   \ie if $\ell$ lies in the integers interval
   \[ I(d_0) = \llbracket d_0^2 - 3d_0+5+3k-\varepsilon ,  d_0^2+3d_0-k-1+\varepsilon  \rrbracket. \]
    Let $f(d_0)=d_0^2 - 3d_0+5+3k-\varepsilon$ and $g(d_0)=d_0^2+3d_0-k-1+\varepsilon$. 
 If $g(d_0-1) \geq f(d_0) -1$  then one has
    \[ I(d_0-1) \cup I(d_0) =\llbracket f(d_0-1),g(d_0) \rrbracket \]
    \ie one can glue the intervals. 
    We compute 
\begin{align*}
     g(d_0-1)- f(d_0) +1 &= (d_0-1)^2 + 3(d_0-1)-k-1 +\varepsilon - d_0^2+3d_0-5-3k+\varepsilon +1 \\
     &= 4d_0-4k-7 +2\varepsilon,
\end{align*} 
so one can glue the intervals if $d_0 \geq k+ \frac{7}{4}-\frac{\varepsilon}{2}$, \ie if $d_0 \geq k+2$. 
Now, if $k$ is fixed then $d_0$ can vary between $k+2$ and $(m-3)/2$ and we conclude that if 
\[ f(k+1)  \leq \ell \leq g\left(\frac{m-3}{2}\right) \]
then there is no separating morphism of degree $\ell+k$.
\newline

\item[(b)] If $m$ is even then $\R C$ has no pseudo-line, \ie $\ell_n=0$ and $\ell_c=\ell$. 
Choose $D\in|C+K_{\P^2}|$ (\ie $D$ is a curve of degree $d=m-3$) and a decomposition $D=2D_0+D_1$ as in \cite{O21}. 
By Theorem~\ref{thm-P2} there is no separating morphism of degree $\ell+k$ if
   \begin{enumerate}[label=$(\roman*)$]
       \item $\ell+k-\varepsilon < d_0^2+3d_0$
       \item $\ell-3k + \varepsilon  > d_0^2-3d_0+2$
   \end{enumerate}
   \ie if $\ell$ lies in the integers interval
   \[ I(d_0) = \llbracket d_0^2 - 3d_0+3+3k-\varepsilon ,  d_0^2+3d_0-k-1+\varepsilon  \rrbracket. \]
    Let $f(d_0)=d_0^2 - 3d_0+3+3k-\varepsilon$ and $g(d_0)=d_0^2+3d_0-k-1+\varepsilon$. 
 If $g(d_0-1) \geq f(d_0) \gadd{-1}$ then one has
    \[ I(d_0-1) \cup I(d_0) =\llbracket f(d_0-1),g(d_0) \rrbracket \]
    \ie one can glue the intervals. 
    We compute 
\begin{align*}
     g(d_0-1)- f(d_0)+1 &= (d_0-1)^2 + 3(d_0-1)-k-1 +\varepsilon - d_0^2+3d_0-3-3k+\varepsilon +1 \\
     &= 4d_0-4k-5 +2\varepsilon,
\end{align*} 
so one can glue the intervals if $d_0 \geq k+ \frac{5}{4}-\frac{\varepsilon}{2}$, \ie if $d_0 \geq k+2$ (when $\ell+k$ is even) and if $d_0 \geq k+1$ (when $\ell+k$ is odd), \ie if $d_0 \geq k+2-\varepsilon$. 
Now, if $k$ is fixed then $d_0$ can vary between $k+2-\varepsilon$ and $(m-4)/2$ and we conclude that if 
\[ f(k+1-\varepsilon)  \leq \ell \leq g\left(\frac{m-4}{2}\right) \]
then there is no separating morphism of degree $\ell+k$.
\newline
\end{enumerate}

The last point comes from the fact that if $k\geq1$ satisfies the hypotheses, then so does $k-1$. Hence $C$ does not admit any separating morphism of degree $\ell+k$ for $0\leq k \leq k_{\max}$, meaning its separating gonality is at least $\ell+k_{\max}+1$.
\end{proof}

\subsection{For Hirzebruch surfaces $\F_n$}

In $\F_n$, a curve of bidegree $(a,b)$ is a curve of class $aE_0+bF$, where $E_0^2=n$ and $F^2=0$, see Figure \ref{fig-trapeze}. Let $E_\infty$ be the curve class of bidegree $(1,-n)$. The canonical divisor $K_{\F_n}$ is of bidegree $(-2,-2+n)$.

\begin{figure}[h]
\centering
\begin{tikzpicture}

    \draw (0,0) -- (0,2) -- (3,2) -- (7,0) -- cycle ;
    
    \node at (3,0.3) {$E_0$} ;
    \node at (0.3,1) {$F$} ;
    \node at (1.5,1.7) {$E_\infty$} ;

    \draw[<->] (0,-0.2) -- (7,-0.2) ;
    \node at (3,-0.6) {$an+b$} ;
    \draw[<->] (-0.2,0) -- (-0.2,2) ;
    \node at (-0.6,1) {$a$} ;
    \draw[<->] (0,2.2) -- (3,2.2) ;
    \node at (1.5,2.6) {$b$} ;

\end{tikzpicture}
\caption{The Newton polygon of a curve of bidegree $(a,b)$ in $\F_n$.}
\label{fig-trapeze}
\end{figure}

\begin{thm}\label{thm-P1P1}
    Let $C\subset \F_n$ be an irreducible non-singular separating curve of bidegree $(a,b)$ whose real part has $\ell$ connected components. Choose $D\in|C+K_{\F_n}|$ and a decomposition $D=2D_0+D_1$ such that $D_0$ is effective of bidegree $(d_0^x,d_0^y)$ and with $d_0^x\geq1$ and $d_0^y\geq 1$, and $D_1$ is a reduced curve which does not contain $E_\infty$. Assume furthermore that $d_0^x\geq1$ and $d_0^y \geq1$. Let $\ell_c$ be the number of ovals of $\R C$ and $\ell_n=\ell-\ell_c$. Let $k\geq0$ be an integer and $\varepsilon = \ell+k \mod 2$. If
    \begin{enumerate}[label=(\roman*)]
        \item $\ell+k -\varepsilon < 2(d_0^xd_0^y+d_0^x+d_0^y) + nd_0^x(d_0^x+1)$  
        \item $\ell_c-\ell_n-3k + \varepsilon > 2(d_0^x-1)(d_0^y-1)+nd_0^x(d_0^x-1) $ 
    \end{enumerate}
    then there exists no separating morphism $f:C\to\P^1$ of degree $\ell+k$.
\end{thm}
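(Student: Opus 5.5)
The proof will mirror that of Theorem~\ref{thm-P2}, replacing the numerical data of $\P^2$ by those of $\F_n$. Assume there is a separating morphism $f:C\to\P^1$ of degree $\ell+k$ with (i) satisfied; we show (ii) fails.

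\textbf{Dimension count.} For $D_0$ of bidegree $(d_0^x,d_0^y)$ with $d_0^x,d_0^y\geq 0$, the Newton polygon count (lattice points of the trapezoid minus one) gives
\[
\dim|D_0| = (d_0^x+1)(d_0^y+1) + \frac{n d_0^x(d_0^x+1)}{2} - 1 = \frac{2(d_0^xd_0^y+d_0^x+d_0^y) + nd_0^x(d_0^x+1)}{2}.
\]
The self-intersection is
\[
D_0^2 = n(d_0^x)^2 + 2d_0^xd_0^y .
\]
Fix a generic $p\in\RP^1$. To apply Lemma~\ref{lem-dimD(p)} I need some $D_1'\in|D_1|$ avoiding $f^{-1}(p)$. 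Since $D_1$ does not contain $E_\infty$, its class should be base-point free on $\F_n$ (a class $aE_0+bF$ with $b\geq 0$ is base-point free). So a generic member avoids the finite set $f^{-1}(p)$, or, for fixed $D_1$, a generic $p$ works. Lemma~\ref{lem-dimD(p)} then gives $h=\dim\mathcal D(p)\geq \dim|D_0|-\frac{\ell+k-\varepsilon}{2}$, which is $>0$ by (i).

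\textbf{Case 1: no common component in $\mathcal D(p)$.} Impose $h-1$ further generic points to get a pencil without common component. Then
\[
D_0^2\geq \ell+k+h-1 .
\]
Doubling and using the bound on $h$ yields
\[
\ell+k+\varepsilon\leq 2D_0^2-2\dim|D_0|-2 = 2d_0^xd_0^y-2d_0^x-2d_0^y+n(d_0^x)^2-nd_0^x-2 .
\]
This equals $2(d_0^x-1)(d_0^y-1)+nd_0^x(d_0^x-1)-4$. Since $\ell_c-\ell_n-3k\leq \ell+k$, (ii) fails.

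\textbf{Case 2: common component.} Fix $h$ generic real points off $C$, obtaining curves $E_p$. As in the proof of Theorem~\ref{thm-P2}, $(E_p)_p$ is a pencil; write $E_p=E+F_p$ with $(F_p)$ free of common components. Then
\[
D_0^2=E^2+2E\cdot F_p+F_p^2\geq E^2+F_p^2 ,
\]
using $E\cdot F_p\geq 0$. Also $E^2\geq h-h'-1$ by genericity of the points.

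The point requiring care is $F_p^2\geq \ell_c-k+h'$. For this I need $F_p\cdot C\geq 2\ell_c+\ell_n$: each real member $F_p$ meets every component of $\R C$, hitting ovals at least twice. It meets every component because $f^{-1}(p)\subset C\cap F_p$ (since $E$ is fixed and contains no point of the generic fibre), and every component of $\R C$ contains a point of each real fibre of a separating morphism. The nonoval components (classes in $H_1$ of the torus or Klein bottle) are met at least once, which is all we claim. Hence the pencil $(F_p)$ has at least $F_p\cdot C-(\ell+k)\geq \ell_c-k$ base points on $C$, plus the $h'$ points off $C$.

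Combining, $D_0^2\geq \ell_c-k+h-1$. Doubling and inserting the bound on $h$ gives
\[
2(d_0^x-1)(d_0^y-1)+nd_0^x(d_0^x-1)\geq \ell_c-\ell_n-3k+\varepsilon ,
\]
contradicting (ii).

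The arithmetic identity $2D_0^2-2\dim|D_0|+2 = 2(d_0^x-1)(d_0^y-1)+nd_0^x(d_0^x-1)$ should be checked carefully. The main obstacle, however, is justifying the hypothesis of Lemma~\ref{lem-dimD(p)} (freeness of $|D_1|$, which is where excluding $E_\infty$ enters) and the genericity claims in $\F_n$ for $E^2\geq h-h'-1$. Any fixed component like $E_\infty$ or fibres must be handled, which is presumably why $d_0^x,d_0^y\geq 1$ is assumed.
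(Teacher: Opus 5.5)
\textbf{Gap in Case~2.} The step that fails is the claim $E^2\geq h-h'-1$ ``by genericity of the points''. On $\F_n$ this is false when the fixed component $E$ is a union of fibres, i.e.\ has bidegree $(0,d^y)$, or $(d^x,0)$ when $n=0$.

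In that situation, each of the $h-h'$ points of $\mathcal{H}\setminus\mathcal{H}'$ lies on its own fibre. Every fibre of $E$ must contain one of them, otherwise it could move. So $d^y=h-h'$, while $E^2=0<h-h'-1$ as soon as $h-h'\geq 2$. You have already discarded the cross term via $E\cdot F_p\geq 0$, so nothing in your argument compensates, and $D_0^2\geq \ell_c-k+h-1$ is not established.

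The missing step is to keep $2E\cdot F_p$ in this case. Since $F_p\sim D_0-E$ has bidegree $(d_0^x,d_0^y-d^y)$, you get $E\cdot F_p=d_0^x d^y\geq h-h'$, because $d_0^x\geq 1$. Symmetrically, $d_0^y\geq 1$ handles the other ruling when $n=0$. Hence $E^2+2E\cdot F_p\geq h-h'-1$ still holds.

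In the remaining case ($d^x\geq 1$, resp.\ both degrees positive when $n=0$) your bound is correct. It does need a justification, though: $E$ is non-singular, hence irreducible, so moving one point gives a pencil without common component with $h-h'-1$ base points. You rightly suspected that fibres are where the hypotheses $d_0^x,d_0^y\geq 1$ enter, but the proof needs this step carried out, not just flagged.

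\textbf{Smaller slip in Case~1.} Combining $D_0^2\geq \ell+k+h-1$ with the bound on $h$ gives $\ell+k+\varepsilon\leq 2D_0^2-2\dim|D_0|+2$. This equals the right-hand side of (ii) exactly, as in your closing identity, not that quantity minus $4$. The conclusion still follows since $\ell_c-\ell_n-3k\leq \ell+k$, but the inequality you display is not justified as written.
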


\begin{proof}
    Assume that there exists a separating morphism $f:C\to\P^1$ of degree $\ell+k$ satisfying~$(i)$, \ie such that $\ell+k-\varepsilon <  2(d_0^xd_0^y+d_0^x+d_0^y) + nd_0^x(d_0^x+1)$. We will show that $(ii)$ cannot be satisfied.
    
    Choose a point $p\in\RP^1$ and consider the linear system $\mathcal{D}(p)\subset|D_0|$ of effective divisors linearly equivalent to $D_0$ and passing through all the points of $f^{-1}(p)$. Since $D_1$ does not contain $E_\infty$ then by Lemma \ref{lem-dimD(p)}, the dimension $h$ of $\mathcal{D}(p)$ is at least $\dim| D_0|-\frac{\ell+k-\varepsilon}{2}$. 
    One has $\dim|D_0| = d_0^xd_0^y +d_0^x+d_0^y + \frac{n}{2}d_0^x(d_0^x+1)$ so hypothesis~$(i)$ on $\ell+k$ gives 
    \begin{equation}\label{eq-hminor-Fn}
    h\geq d_0^xd_0^y +d_0^x+d_0^y + \frac{n}{2}d_0^x(d_0^x+1) - \frac{\ell+k-\varepsilon}{2} >0 .  
    \end{equation}
    
    There are now two possibilities. 
    First, if the curves of $\mathcal{D}(p)$ do not share a common component, then one fixes $h-1$ points in $\F_n$ so that there is a pencil of curves of $\mathcal{D}(p)$ passing through these points and such that any two curves of this pencil do not share any common component. In that case, two of these curves intersect transversely in at least $\ell+k+h-1$ points, so $D_0^2\geq \ell+k+h-1$. Multiplying both sides by 2 and using Inequality~(\ref{eq-hminor-Fn}) one deduces that $2D_0^2\geq \ell+k+D_0^2-K_{\F_n}D_0+\varepsilon$ and hence $\ell+k+\varepsilon \leq D_0^2 + K_{\F_n}D_0$, which implies in particular
    \[ \ell_c-\ell_n-3k + \varepsilon \leq D_0^2+K_{\F_n}D_0+2 =  2(d_0^x-1)(d_0^y-1)+nd_0^x(d_0^x-1),\]
    meaning $(ii)$ is not satisfied. In this case, note that we actually proved a stronger statement: we could replace condition $(ii)$ by the weaker version 
    \begin{enumerate}
        \item[$(ii)'$] $\ell+k+\varepsilon >  2(d_0^x-1)(d_0^y-1)+nd_0^x(d_0^x-1)$. 
    \end{enumerate}
    The need for Condition $(ii)$ arises when studying the second case.
    \newline
    
    Second, if the curves of $\mathcal{D}(p)$ all share a common component, then one can now vary $p$ in order to compute self-intersection numbers. Fix a subset $\H\subset \R \F_n$ of $h$ points in generic position and outside the curve $C$, so that for every generic $p\in\P^1$ there is exactly one curve $E_p\in\mathcal{D}(p)$ passing through these $h$ points. Now, one checks that $(E_p)_p$ is a pencil.  
    By definition of $E_p$, for all $p\in\P^1$ one has $f^{-1}(p) \subset C\cap E_p$. Write $p=[\lambda_0:\lambda_1]$ and let $P_0$ and $P_1$ be two polynomials such that $V(P_0)= E_{[0:1]}$ and $V(P_1)=E_{[1:0]}$ (in a local chart of $\F_n$). The pencil $\left(C\cap V(\lambda_1P_0+\lambda_0P_1)\right)_{[\lambda_0:\lambda_1]}$ contains the pencil $(f^{-1}(p))_p$, because it contains it at the two points $p=[0:1]$ and $p=[1:0]$. In particular, the curve $V(\lambda_1P_0+\lambda_0P_1)$ passes through all the points of $f^{-1}([\lambda_0:\lambda_1])$ but also through all the fixed $h$ points, because both $V(P_0)$ and $V(P_1)$ do. As a consequence, $V(\lambda_1P_0+\lambda_0P_1) = E_{[\lambda_0:\lambda_1]}$, implying that $(E_p)_p$ is a pencil.

    We would like now to compute $E_p^2$ by counting the number of intersection points between $E_p$ and another element $E_q$ of the pencil. However, this strategy may fail if the pencil $(E_p)_p$ has a common component. Denote by $E$ the potential common component of this pencil, and $\H'\subset\H$ be the set of the $h'$ points of $\H$ through which $E$ does not pass. As divisors one has $E_p=E+F_p$, where $(F_p)_p$ is a pencil without common component. We get 
    \[ E_p^2 = E^2 + 2E\cdot F_p + F_p^2 \]
    and we will bound by below each of the three terms.
    
    Since $(F_p)_p$ is a pencil without common component, then one can compute its self-intersection $F_p^2$ as the intersection of two elements of the pencil $F_p$ and $F_q$. The morphism $f:C\to\P^1$ is given by the intersection of the curve $C$ with the pencil $(F_p)_p$, implying that this pencil has at least $F_p\cdot C-\deg(f)$ base points on the curve $C$ (note that $F_p$ and $C$ do not share any common component because otherwise $C$ would be a common component of $F_p$). Moreover, the curves $F_p$ intersect each of the components of $\R C$, so that $F_p\cdot C$ is at least $2\ell_c+\ell_n$ (each oval is intersected at least twice). Hence the pencil $(F_p)_p$ has at least $2\ell_c+\ell_n-\ell-k=\ell_c-k$ base points on the curve $C$. Adding the $h'$ base points in $\mathcal{H'}$ which are fixed outside of $C$, the pencil $(F_p)_p$ has at least $\ell_c-k+h'$ base points, so $F_p^2\geq \ell_c-k+h'$.

    The curve $E$ is determined by $h-h'$ points in generic position so it is non-singular. Assume $E$ has bidegree $(d^x,d^y)$. Two cases can happen.
    \begin{itemize}
        \item If $n \geq 1$ and $d^x$ is non-zero or if $n=0$ and both $d^x$ and $d^y$ are non-zero, then $E$ is irreducible, otherwise it would be singular. Then, by varying one of the fixed points one gets a pencil of curves linearly equivalent to $E$ without common component, with $h-h'-1$ base points. Hence one has $E^2\geq h-h'-1$. Note that $E\cdot F_p \geq0$.

        \item Assume now that $E = (0, d^y)$; the case $n=0$ and $E = (d^x, 0)$ can be treated similarly. One has $E^2 = 0$ and we will compute $E\cdot F_p$. 
        
        First, note that $d^y=h-h'$. Indeed, each of the $h-h'$ points must be on one of the $d^y$ components of $E$, and two of these points cannot be on the same component otherwise the configuration is not generic, so $h-h' \leq d^y$. But any component of $E$ must contain a point, otherwise this component can move and we get a contradiction, so $d^y\leq h-h'$.

        Now, since $F_p = E_p-E \sim D_0-E$ then $F_p$ has bidegree $(d_0^x,d_0^y-d^y)$ and we compute 
        \[  E\cdot F_p = d_0^x d^y \geq h-h' \]
        where we used $d_0^x\geq1$. In particular, we get $2E\cdot F_p \geq h-h'-1$.

    \end{itemize}

    In both cases, one has
    \[ D_0^2 = E_p^2 = E^2 + 2E\cdot F_p + F_p^2 \geq (\ell_c-k+h') + (h-h'-1)+0 = \ell_c-k+h-1.  \]
    Finally, by multiplying this inequality by 2 and applying Inequality (\ref{eq-hminor-Fn}) one gets 
    \[ 2D_0^2\geq 2\ell_c-2k+ 2(d_0^xd_0^y +d_0^x+d_0^y) + nd_0^x(d_0^x+1) - \ell-k+\varepsilon -2 \]
    which yields 
    \[ 2(d_0^x-1)(d_0^y-1)+nd_0^x(d_0^x-1) \geq \ell_c-\ell_n-3k+\varepsilon,\]
    contradicting condition $(ii)$.
\end{proof}

\begin{ex}
Let $C \subset \mathbb \P_1 \times \P_1$ be an irreducible real curve of bidegree $(4,4)$ whose real scheme is $\langle2\langle1\rangle\rangle$, see Figure \ref{fig-exP1P1}. 
Note that this curve is separating. Indeed, the pencil of planes generated by the line $d$ on Figure \ref{fig-exP1P1} defines a real and separating morphism of degree~$8$, corresponding to the $8$ intersection points of $C$ with a plane. 
If the line $d$ intersects one of the two inner ovals at a point $p$, then we obtain a separating morphim of degree $7$, the image of $p$ being the parameter $t\in\P^1$ corresponding to the only plane of the pencil that is tangent to the oval (\ie has a single point of intersection with the oval).
Similarly, if $d$ intersects both inner ovals, we obtain a separating morphism of degree $6$.

\begin{figure}[h!]
    \centering
    \includegraphics[scale=0.25]{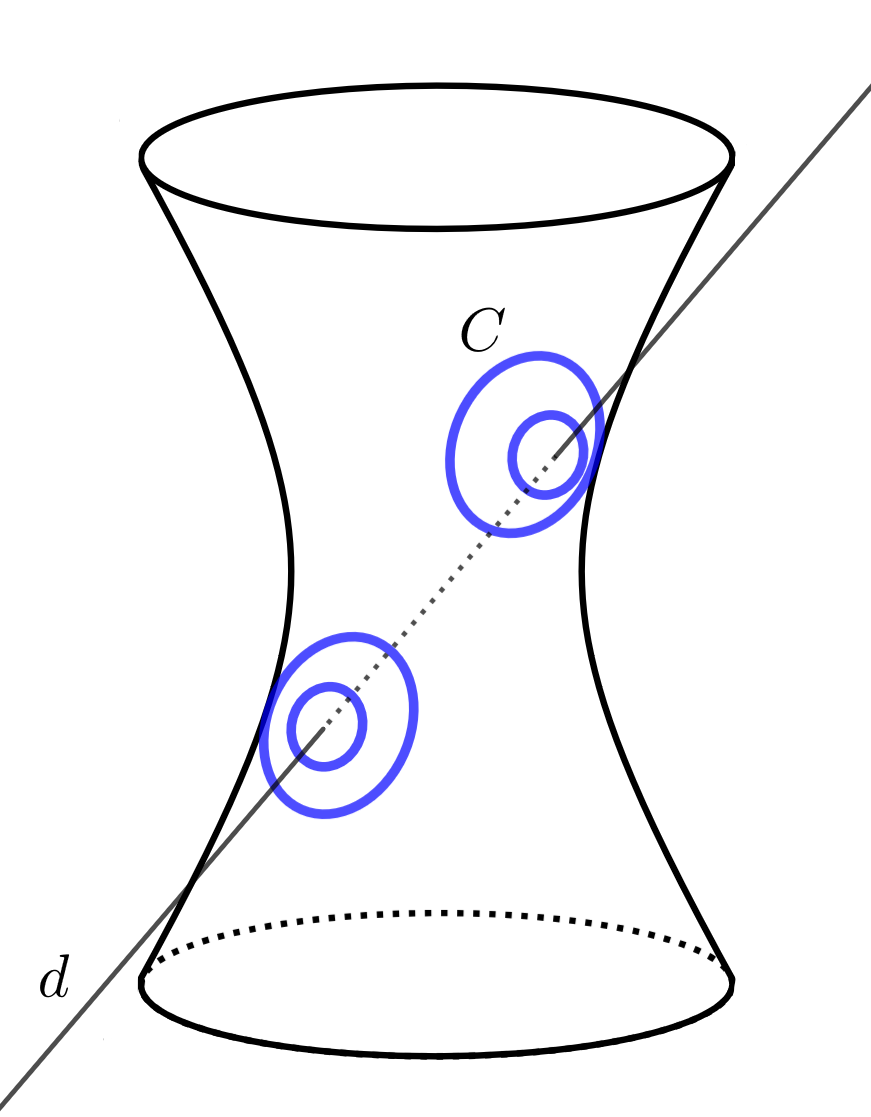}
    \caption{A curve of bidegree $(4,4)$ with real scheme $\langle2\langle1\rangle\rangle$.}
    \label{fig-exP1P1}
\end{figure}

We now apply Theorem \ref{thm-P1P1} to $C$. One has $\ell=\ell_c=4$ and $\ell_n=0$. One can choose $k=0$ or $1$, $D_0$ to be of bidegree $(1,1)$ and $D_1$ to be empty. One has
    \begin{enumerate}[label=$(\roman*)$]
        \item $4 = \ell +k-\varepsilon < 2(d_0^xd_0^y+d_0^x+d_0^y) + 0 = 6$ 
        \item $4 \text{ or } 2 = \ell_c-\ell_n -3k + \varepsilon > 2(d_0^x-1)(d_0^y-1)+ 0 = 0$
    \end{enumerate}
so $C$ does not admit any separating morphim of degree $4$ and $5$. We conclude that the separating gonality of $C$ is $6$.
\end{ex}

\begin{ex}
Let $C \subset \mathbb \P_1 \times \P_1$ be an irreducible real curve of bidegree $(6,6)$ whose real scheme is $\langle2\langle1\langle1\rangle\rangle\rangle$. Similarly to the previous example, this curve is separating and there exists a separating morphism of degree $10$. Application of Theorem \ref{thm-P1P1} with $D_0$ of bidgree $(2,2)$ shows there is no separating morphism of degree $6$ or $7$, hence the separating gonality is $8$, $9$ or $10$.
\end{ex}

\begin{rmk}
    The beginning of the proofs of Theorems \ref{thm-P2} and \ref{thm-P1P1} are the same : 
    lower bound for the dimension of $\mathcal{D}(p)$, case where the curves of $\mathcal{D}(p)$ do not share a common component, introduction of $(E_p)_p$, $E$ and $(F_p)_p$ when there is a common component, and lower bound for $F_p^2$.
    We then want to find a lower bound for $E^2+E\cdot F_p$, and the reasoning applied to achieve this last step is specific to the surface we consider, either $\P^2$ or $\F_n$. 

    We believe that a similar result should hold for many surfaces $X$ under the hypotheses that
    \begin{itemize}
        \item $\chi(\O_X) = 1$ and $H^2(X,\mathcal{L}(D_0))=0$ (to use Riemann--Roch theorem and obtain $\dim|D_0|\geq  \frac{D_0^2-K_XD_0}{2}$, which is what we use directly in the case of $\P^2$ and $\F_n$),
    \item    \begin{enumerate}[label=$(\roman*)$]
        \item $\ell+k -\varepsilon < D_0^2-K_X\cdot D_0$
        \item $\ell_c-\ell_n-3k + \varepsilon > D_0^2 + K_X\cdot D_0 +2 = 2g(D_0)$.
    \end{enumerate}
    \end{itemize} 
    However, we are not able to carry out the strategy in this more general context.
\end{rmk}

\printbibliography 
\end{document}